\newtheorem{Theorem}{Theorem}
\newtheorem{Claim}{Claim}
\newtheorem{Corollary}[Theorem]{Corollary}
\newtheorem{Lemma}{Lemma}
\theoremstyle{remark}
\newtheorem{Remark}{Remark}
\begin{document}

\title{Knots with arbitrarily high distance bridge decompositions}

\author{Kazuhiro Ichihara}
\address{Department of Mathematics, 
College of Humanities and Sciences, Nihon University,
3-25-40 Sakurajosui, Setagaya-ku, Tokyo 156-8550, Japan}
\email{ichihara@math.chs.nihon-u.ac.jp}
\thanks{The first author is partially supported by
Grant-in-Aid for Young Scientists (B), No.~23740061, 
Ministry of Education, Culture, Sports, Science and Technology, Japan.}

\author{Toshio Saito}
\address{Department of Mathematics,
Joetsu University of Education,
1 Yamayashiki, Joetsu 943-8512, Japan}
\email{toshio@juen.ac.jp }
\thanks{The second author is partially supported by
Grant-in-Aid for Young Scientists (B), No.~24740041, 
Ministry of Education, Culture, Sports, Science and Technology, Japan.}

\begin{abstract}
We show that 
for any given closed orientable 3-manifold $M$ 
with a Heegaard surface of genus $g$, 
any positive integers $b$ and $n$, 
there exists a knot $K$ in $M$ 
which admits a $(g,b)$-bridge splitting 
of distance greater than $n$ 
with respect to the Heegaard surface 
except for $(g,b)=(0,1), (0,2)$. 
\end{abstract}

\keywords{knot, Heegaard splitting, bridge decomposition, distance}

\subjclass[2000]{Primary 57M50; Secondary 57M25}

\date{\today}

\maketitle

\section{Introduction}

A \textit{Heegaard splitting} of a closed, orientable 3-manifold $M$ is 
a decomposition of $M$ into two handlebodies, 
which was originally introduced and studied by Heegaard \cite{Heegaard}. 
About fifty years later, Moise proved in \cite{Moise} that 
every closed orientable 3-manifold can admit a Heegaard splitting. 
It is then shown that certain properties of Heegaard splittings 
reflect topological characteristics of 3-manifolds: 
For example, 
any Heegaard splitting of a reducible 3-manifold is reducible \cite{Haken}, 
any irreducible Heegaard splitting of a non-Haken 3-manifold is 
strongly irreducible \cite{CassonGordon}. 

Motivated by such works, 
Hempel \cite{Hempel} introduced 
an integer-valued invariant of a Heegaard splitting, 
called the \textit{distance}, 
or commonly called the \textit{Hempel distance}. 
This is a natural refinement of strong irreducibility, 
and measures certain complexity of Heegaard splittings. 
A lot of studies have been done about 
the distance of Heegaard splittings. 
See \cite[Section 6]{Minsky} for an aspect of such works. 

Among them, on the existence of high distance splittings, 
there are several known results. 
We here summarize a part of them in the following. 

First, Hempel \cite{Hempel} exhibited that there exist 
Heegaard splittings of closed orientable 3-manifolds 
with distance at least $n$ for arbitrarily large $n$ 
(adapting an idea of Kobayashi \cite{Kobayashi}). 
Later, Evans \cite{Evans} explicitly defined an infinite sequence 
of closed 3-manifolds $\{ M_n \}$ and proved that the distance of 
a Heegaard splitting of $M_n$ is at least $n$ 
by using purely combinatorial techniques. 

About the non-closed compact 3-manifold case, 
in particular, the knot exterior case, 
Minsky, Moriah and Schleimer \cite{MinskyMoriahSchleimer} 
constructed knots in the 3-sphere $S^3$ with the exteriors 
having Heegaard splittings of arbitrarily high distance, in any genus. 
Extending their results, 
Campisi and Rathbun \cite{CampisiRathbun} showed that 
for any closed 3-manifold $M$ 
with a minimal genus Heegaard splitting of genus $g(M)$ and 
any integers $g \ge g(M) + 1$ and $n > 0$, 
there is a knot $K$ in $M$ with its exterior 
admitting a genus $g$ Heegaard splitting of distance greater than $n$.

A natural generalization of the Heegaard splitting 
for a link in a 3-manifold is given by the bridge splitting. 
A \textit{$(g,b)$-bridge splitting}
of a link $L$ in a closed $3$-manifold $M$ 
is a decomposition of $(M,L)$ into 
two pairs of a genus $g$ handlebody and $b$ trivial arcs.  
For bridge splittings, the notion of distance is naturally defined 
as a generalization of the case of Heegaard splittings for closed manifolds. 
See the next section for details. 

The second author \cite{Saito} showed that 
for any closed $3$-manifold admitting a Heegaard splitting of genus one, 
there is a knot in the manifold 
with a $(1,1)$-bridge splitting of arbitrarily high distance. 
Recently, Blair, Tomova and Yoshizawa \cite{BlairTomovaYoshizawa} 
showed that for given integers $b, c, g$, and $n$, 
there exists a manifold $M$ containing a $c$-component link $L$ 
so that $(M,L)$ admits a $(g,b)$-bridge splitting of distance at least $n$.

Extending their results, in this paper, we show the following. 

\begin{Theorem}\label{Thm}
For any given closed orientable 3-manifold $M$ 
with a Heegaard surface of genus $g$, 
any positive integers $b$ and $n$, 
there exists a knot $K$ in $M$ 
which admits a $(g,b)$-bridge splitting 
of distance greater than $n$ 
with respect to the Heegaard surface 
except for $(g,b)=(0,1), (0,2)$. 
\end{Theorem}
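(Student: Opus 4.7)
I would fix a genus $g$ Heegaard splitting $M = V_1 \cup_\Sigma V_2$, a set $P \subset \Sigma$ of $2b$ points, and trivial arc systems $\tau_i \subset V_i$ with $\partial \tau_i = P$, so that $\tau_1 \cup \tau_2$ is a knot $K_0$ in $M$ admitting a $(g,b)$-bridge splitting along $\Sigma$. Writing $\Sigma_P$ for $\Sigma$ punctured at $P$, the hypothesis $(g,b) \neq (0,1),(0,2)$ guarantees that the curve complex $\mathcal{C}(\Sigma_P)$ is connected and of infinite diameter. The key step is to produce a pseudo-Anosov class $\phi \in \mathrm{Mod}(\Sigma_P)$ that lies in the handlebody subgroup of $V_1$ (i.e., extends to a homeomorphism $\Phi$ of $V_1$) and that is compatible with the partition of $P$ into endpoints of $\tau_2$.

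Given such a $\phi$, I would define $K_n := \tau_1 \cup \phi^n(\tau_2)$. Since $\phi$ extends over $V_1$, the ambient manifold is unchanged, and $(V_1,\tau_1) \cup_\Sigma (V_2,\phi^n(\tau_2))$ is a $(g,b)$-bridge splitting of $K_n$ with respect to the given Heegaard surface. Let $\mathcal{D}_i \subset \mathcal{C}(\Sigma_P)$ be the disk set of $(V_i,\tau_i)$ (curves bounding essential disks or cut-disks in the complement of $\tau_i$). The distance of the new splitting then equals $d_{\mathcal{C}(\Sigma_P)}(\mathcal{D}_1, \phi^n(\mathcal{D}_2))$, which by the Masur--Minsky translation-length theorem for pseudo-Anosov mapping classes grows linearly in $n$; in particular, it exceeds $n$ for all sufficiently large $n$.

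The principal obstacle is producing the mapping class $\phi$ above. Analogous constructions in various settings appear in the works cited in the introduction: Evans and Minsky--Moriah--Schleimer for closed and knot-exterior Heegaard splittings, Campisi--Rathbun for arbitrary ambient manifolds, and the second author's earlier paper for $(g,b)=(1,1)$. My strategy is to start with a pseudo-Anosov element in the handlebody subgroup of the closed genus $g$ surface (available for $g \geq 2$; the cases $g \leq 1$ require more care since the punctures contribute the bulk of the complexity) and then modify it near $P$ by high powers of Dehn twists along curves chosen, via a Penner- or Thurston-type construction, so that the composition remains pseudo-Anosov, permutes the $2b$ punctures in a manner compatible with the arc endpoints, and still extends over $V_1$. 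A final, essentially routine check is that $K_n$ is a single component, which follows from the permutation of $P$ being compatible with the pairing induced by $\tau_2$; ruling out the excluded values of $(g,b)$ is precisely what fails for this step, so the exceptional list in the statement is sharp in this approach.
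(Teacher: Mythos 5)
There is a genuine gap, and the central condition you impose on $\phi$ in fact points in the wrong direction. You ask that $\phi$ lie in the handlebody subgroup of $V_1$, i.e.\ extend to a homeomorphism of $V_1$. If that extension preserves the pair $(V_1,\tau_1)$, then $\phi$ preserves the disk set $\mathcal{D}_1$, so $d(\mathcal{D}_1,\phi^n(\mathcal{D}_2))=d(\phi^{-n}(\mathcal{D}_1),\mathcal{D}_2)=d(\mathcal{D}_1,\mathcal{D}_2)$ is \emph{constant} in $n$; and even without preserving $\tau_1$, the invariant laminations of a pseudo-Anosov that extends over $V_1$ arise as limits of meridians of $V_1$, which is precisely the situation one must rule out. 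What is actually needed (and what the paper arranges, via Ichihara--Motegi) is a map that is pseudo-Anosov only on the punctured surface, whose capped-off map $\widehat{\phi}\colon F\to F$ is isotopic to the identity: this keeps $M$ fixed while the isotopy drags the arcs in $V_2$ into a new bridge position, and it does not force either disk set to be $\phi$-invariant.

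The second gap is the distance estimate. The Masur--Minsky translation-length theorem gives $d(\gamma,\phi^n(\gamma))\ge cn$ for a single vertex $\gamma$, but $\mathcal{D}_1$ and $\mathcal{D}_2$ have infinite diameter in $\mathcal{C}(\Sigma_P)$, so no triangle-inequality argument converts this into $d(\mathcal{D}_1,\phi^n(\mathcal{D}_2))\to\infty$. What one really needs is that the stable and unstable laminations of $\phi$ avoid the closures $\overline{\mathcal{D}_1}$ and $\overline{\mathcal{D}_2}$ in $\mathcal{PML}(\Sigma_P)$; the Abrams--Schleimer argument (continuity of the intersection pairing plus unique ergodicity) then yields divergence. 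Arranging this lamination condition is the technical heart of the paper: one builds pants decompositions of the punctured surface adapted to both handlebodies and to the arcs, finds a curve $c$ traversing all their seams (using Campisi--Rathbun), shows $[c]\notin\overline{\mathcal{D}_i}$ by a wave/outermost-disk argument, and replaces $\phi$ by the conjugate $\tau_c^{N}\circ\phi\circ\tau_c^{-N}$ for large $N$ so that its laminations are close to $[c]$. None of this appears in your outline, and your extension-over-$V_1$ hypothesis actively obstructs it. (Your remarks on connectivity of the curve complex and on choosing the puncture permutation so that the result is a single knot are in the right spirit; the paper realizes the latter by a cyclic permutation of the $2b$ punctures and powers $m\equiv 1\pmod{2b}$.)
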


\begin{Remark}
If we regard a $(g,0)$-bridge splitting for a knot 
as a Heegaard splitting of the exterior of the knot, 
then the case where $b=0$ corresponds 
to the result given in \cite{CampisiRathbun}. 
\end{Remark}

\begin{Remark}
Note that the case $(g,b)=(0,1)$ have to be excluded, 
for the distance of such a splitting cannot be defined. 
Also, for the case $(g,b)=(0,2)$, 
if we change some definitions to define the distance 
for $(0,2)$-bridge splittings suitably, 
then the same result can be obtained. 
See the remarks in Section 2. 
\end{Remark}

\begin{Remark}
In general, we can also make 
a link of at most $b$ components in stead of the knot 
enjoying the desired properties in the theorem. 
See remarks in Section 3. 
\end{Remark}

As an application, in Section 4, we will study 
the \textit{meridional destabilizing numbers} of knots in $S^3$ 
introduced by the second author in \cite{Saito2}. 
Recall that, for a knot in $S^3$, an invariant is defined by 
the minimal genus of Heegaard surfaces of its exterior minus one, 
called the \textit{tunnel number} of a knot. 
This gives an interesting subject to study in Knot Theory, and 
there are many works to study it. 
By considering bridge splittings of knots simultaneously, 
for example, tunnel number one knots are classified into 3 classes: 
the non-trivial two-bridge knots, i.e., knots admitting $(0,2)$-bridge splittings, 
the $(1,1)$-knots, i.e., knots admitting $(1,1)$-bridge splittings, and 
the other knots, i.e., knots admitting $(2,0)$-bridge splittings. 
A natural generalization of this filtration is given by 
considering the meridional destabilizing number, and 
as a corollary to Theorem 1, we have the following.

\begin{Corollary}\label{cor}
For any integers $t \ge 1$ and $m \ge 0$ with $m \le t+1$, 
there exists a knot $K\subset S^3$ 
of tunnel number $t$ and of meridional destabilizing number $m$.
\end{Corollary}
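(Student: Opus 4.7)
The plan is to combine Theorem \ref{Thm} (together with the $b = 0$ case covered by \cite{CampisiRathbun}, as noted in the first remark following the theorem) with the rigidity of high-distance bridge surfaces to simultaneously pin down both the tunnel number and the meridional destabilizing number. Given $t \geq 1$ and $0 \leq m \leq t+1$, set $g := t+1-m$ and $b := m$. A $(g,b)$-bridge splitting of a knot $K \subset S^{3}$ induces a Heegaard surface of genus $g+b = t+1$ on $E(K)$, so if no $(g',b')$-bridge splitting with $g' + b' < t+1$ exists then $K$ has tunnel number exactly $t$. From the definition in \cite{Saito2}, $m(K)$ equals the largest $b'$ over $(g',b')$-splittings with $g' + b' = t+1$, so $m(K) = m$ is equivalent to $K$ admitting a $(g,b)$-splitting but no $(g-k,\, b+k)$-splitting for any $k \geq 1$.

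When $b \geq 1$ and $(g,b) \neq (0,1),(0,2)$, I would apply Theorem \ref{Thm} with $M = S^{3}$ and the standard genus-$g$ Heegaard surface, choosing $n$ larger than a suitable function of $t$, to obtain a knot $K$ with a $(g,b)$-bridge splitting $P$ of distance $d(P) > n$. By rigidity results of Tomova type, once $n$ is large enough relative to $t$, any bridge surface $Q$ for $(S^{3},K)$ with $2 g(Q) + |Q \cap K| \leq 2g + b$ must be isotopic to $P$. This simultaneously rules out $(g',b')$-splittings with $g' + b' < t+1$ (yielding $t(K) = t$) and those with $g' + b' = t+1$ and $b' > b$ (yielding $m(K) = b = m$). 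The case $b = 0$ is handled analogously by invoking the Campisi--Rathbun construction to produce a knot with a high-distance genus-$(t+1)$ Heegaard splitting of its exterior; rigidity then forbids every $(g',b')$-splitting with $b' \geq 1$ and $g' + b' = t+1$, so $m(K) = 0$.

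The excluded case $(g,b) = (0,1)$ would force $t = 0$ and does not occur under the hypothesis $t \geq 1$. The remaining excluded case $(g,b) = (0,2)$ corresponds to $t = 1$, $m = 2$, and I would handle it directly: any non-trivial two-bridge knot (e.g.\ the trefoil) has tunnel number one and, by definition, meridional destabilizing number two.

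The principal obstacle is calibrating the distance threshold $n = n(t)$ so that rigidity genuinely rules out \emph{every} competing $(g',b')$-bridge surface with $g' + b' \leq t+1$ and $(g',b') \neq (g,b)$. This reduces to invoking known estimates relating the distance of $P$ to the Euler characteristic of alternative bridge or essential surfaces, but confirming a single numerical threshold that works uniformly across all competing pairs $(g',b')$ is where the main bookkeeping lies.
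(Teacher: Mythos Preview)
Your outline is the paper's argument: for $m\ge 1$ produce a $(t-m+1,m)$-bridge surface $\mathcal{S}$ of large distance via Theorem~\ref{Thm}, then apply Tomova's theorem (Lemma~\ref{Lem}) to rule out competing splittings; for $m=0$ cite the $b=0$ literature. The explicit threshold in the paper is $d(\mathcal{S})>2t+2$, since every competitor $\mathcal{S}'$ one must exclude satisfies $2-\chi(S'\setminus K)\le 2(t+1)$.

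One point needs correcting. Tomova's theorem does \emph{not} say that a low-complexity competitor $Q$ is isotopic to $\mathcal{S}$; it says $Q$ is obtained from $\mathcal{S}$ by stabilization, meridional stabilization, and perturbation. Meridional stabilization preserves $g+b$ while changing $(g,b)$, so non-isotopic surfaces of the same complexity genuinely arise. The paper closes the argument with the easy combinatorial observation you omit: each of these three operations satisfies $g'\ge g$ and $g'+b'\ge g+b$, so starting from $(t-m+1,m)$ one can never reach $(t,0)$ (forcing $t(K)=t$) or $(t-m,m+1)$ (forcing $\mathit{md}(K)=m$). With that step inserted, your proof goes through. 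Your direct treatment of the excluded case $(g,b)=(0,2)$ via any nontrivial two-bridge knot is correct and tidier than invoking the modified curve complex.
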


Precise definition of the meridional destabilizing number of knots and 
a proof of this corollary will be given in the last section.

\section*{Acknowledgments}

The authors would like to thank to 
Michael Yoshizawa, Ayako Ido, and Yeonhee Jang 
for useful discussions and information. 
They also thank Maggy Tomova for 
information about Lemma 1 in Section 4, 
Hideki Miyachi for helpful comments about the proof of Claim 1, 
and Yo'av Rieck for pointing out inaccurate statements in our previous draft and for comments on Remark 6. 
Finally they thank to the anonymous referee for his/her careful readings.

\section{Distance of Heegaard splitting and $(g,b)$-bridge splitting}

In this section, we recall definitions of 
a Heegaard splitting of a 3-manifold, 
a $(g,b)$-bridge splitting of a knot, and their (Hempel) distance. 

Let $M$ be a closed orientable $3$-manifold. 
A closed orientable surface $S$ of genus $g$ embedded in $M$ 
is called a (genus $g$) \textit{Heegaard surface} of $M$ 
if $S$ divides $M$ into two handlebodies $V_1$ and $V_2$. 
The triplet $(V_1,V_2;S)$ is called 
a (genus $g$) \textit{Heegaard splitting} of $M$. 

Let $K$ be a knot in a closed orientable $3$-manifold $M$. 
We say that $K$ admits a \textit{$(g,b)$-bridge splitting} ($b>0$) 
if there is a genus $g$ Heegaard splitting $(V_1,V_2;S)$ of $M$ 
such that $V_i\cap K$ consists of $b$ arcs 
which are mutually boundary parallel for each $i=1,2$. 
Set $\mathcal{V}_i=(V_i,V_i\cap K)$ and $\mathcal{S}=(S,S\cap K)$. 
We also call the triplet $(\mathcal{V}_1,\mathcal{V}_2;\mathcal{S})$ 
a \textit{$(g,b)$-bridge splitting} of $(M,K)$, 
and $\mathcal{S}$ is called a \textit{$(g,b)$-bridge surface}, 
or a \textit{bridge surface} for short. 
We say that $K$ admits a \textit{$(g,0)$-bridge splitting} 
if there is a genus $g$ Heegaard splitting $(V_1,V_2;S)$ of $M$ 
such that $K\subset V_i$ ($i=1$ or $2$), 
say $i=2$, and that $V_2- \mathrm{int}N(K)$ is a compression body. 
Note that a $(g,0)$-bridge splitting of $(M,K)$ 
is also called a \textit{Heegaard splitting} of $(M,K)$, and 
a $(g,0)$-bridge surface of $(M,K)$ is called a \textit{Heegaard surface} of $(M,K)$. 

To define the (Hempel) distance of the splittings above, 
we first prepare the terminology about the curve complex, 
originally introduced in \cite{Harvey81, Harvey88}. 
Let $F$ be a compact orientable surface 
of genus $g$ possibly with non-empty boundary. 
Then the \textit{curve complex} $\mathcal{C}(F)$ of $F$ is 
defined as the simplicial complex whose $k$-simplexes are 
the isotopy classes of $k+1$ collections of mutually non-isotopic 
essential loops (i.e., non-contractible and not boundary-parallel) 
which can be realized disjointly. 
It is then proved in \cite{MasurMinsky} that 
the curve complex is connected 
if $F$ is not sporadic, 
i.e., $F$ has at least 5 (resp. 2) boundary components if $g=0$ (resp. $g=1$). 
For vertices $[x]$ and $[y]$ of $\mathcal{C}(F)$, 
the distance $d([x],[y])$ between $[x]$ and $[y]$ is defined as 
the minimal number of $1$-simplexes in a simplicial path joining $[x]$ to $[y]$. 

For a bridge splitting $(\mathcal{V}_1,\mathcal{V}_2;\mathcal{S})$ of $(M,K)$, 
set $W_i=V_i-\textrm{int}(V_i\cap K)$ $(i=1,2)$ 
and $S_0=S-\textrm{int}N(S\cap K)$. 
For $i=1,2$, the set of vertices  in $\mathcal{C}(S_0)$ 
each of which corresponds to a curve bounding a disk in $W_i$ 
is called the \textit{disk set} $\mathcal{D}(W_i)$. 
Then the \textit{Hempel distance} $d(\mathcal{S})$ is defined as 
$d(\mathcal{D}(W_1), \mathcal{D}(W_2))
=\min \{d([x],[y])\ |\ [x]\in \mathcal{D}(W_1),[y]\in \mathcal{D}(W_2)\}$. 

\begin{Remark}
For the case of $(g,b)=(0,2)$, 
equivalently, for the case of two-bridge knots in $S^3$, 
the corresponding curve complex in the above definition is not connected, 
and so the distance of such a splitting cannot be well-defined. 
However, as remarked in Section 1, 
the same result as Theorem 1 can be obtained for this case 
if we change some definition suitably. 
To do this, we define 
the curve complex of the 4-punctured sphere 
as the simplicial complex whose $k$-simplexes are 
the isotopy classes of $k+1$ collections of mutually non-isotopic 
essential loops (i.e., non-contractible and not boundary-parallel) 
which can be realized with just two points as intersections. 
Then the curve complex can be naturally identified with 
the well-known Farey diagram. 
The distance for a given splitting, 
equivalently a given two-bridge knot, 
can be related to 
the minimal length of the continued fractional expansions 
for the rational number corresponding to the two-bridge knot. 
See \cite{HatcherThurston} for example. 
\end{Remark}

\section{Proof of Theorem~\ref{Thm}}

In this section, we give a proof of Theorem 1. 
We can make use of the strategy 
for the proof of the main theorem in \cite{MinskyMoriahSchleimer}. 
Also see \cite[Section 4]{CampisiRathbun} as a summary. 
Actually their theorems concern only closed 3-manifolds, 
and so, we modify the arguments to fit for the case of manifolds with boundary. 

\subsection{Settings}

We start with preparing our setting. 

Let $M$ be a closed orientable 3-manifold 
and $F$ a Heegaard surface of $M$ of genus $g$. 
That is, $F$ decomposes $M$ into two handlebodies $V_1$ and $V_2$. 
Choose arbitrary integers $n, b \ge 1$. 
We only consider the case $b \ge 3$ if $g=0$ by the assumption. 

As stated in Section 1, 
the second author \cite{Saito} showed that 
for any closed $3$-manifold admitting a Heegaard splitting of genus one, 
there is a knot in the manifold 
with a $(1,1)$-bridge splitting of arbitrarily high distance. 
Thus, in the following, 
we consider the case $b \ge 2$ if $g=1$. 

Let us first take a $2b$-tuple of points 
$\{ x_1 , x_2, \cdots, x_{2b-1} , x_{2b} \}$ on $F$. 
Take embedded $b$ arcs $\alpha_1 , \cdots , \alpha_b$ on $F$ 
disjoint from each other 
such that $\alpha_j$ connects $x_{2j-1}$ and $x_{2j}$ for $j = 1, \cdots, b$.
Pushing the interior of each arc $\alpha_j$ into $V_i$, 
we have a pair of $b$ boundary-parallel arcs in both handlebodies, 
say, $\alpha_{i,1} , \alpha_{i,2} , \cdots , \alpha_{i,b} \subset V_i$ for $i=1,2$. 
Then they compose a $b$-component trivial link in $M$. 

Take a small regular neighborhood $N(F)$ of $F$ in $M$, 
which is topologically $F \times [0,1]$, 
and regard it as lying in $V_2$ with $F=F \times \{ 0 \}$. 
Now we define $W_1 = V_1 - \mathrm{int} N(\alpha_{1,1} \cup  \cdots \cup \alpha_{1,b})$ 
and $W_2 = V_2 - \mathrm{int} N(\alpha_{2,1} \cup  \cdots \cup \alpha_{2,b})$.

\subsection{Key ingredients}

We here prepare two key ingredients used in the proof of Theorem 1. 
One is a pseudo-Anosov homeomorphism $\phi$ on $F_0:=F-\mathrm{int} N(x_1\cup \cdots \cup x_{2b})$, 
and the other is a simple closed curve $c$ on $F_0$, 
both satisfying some particular conditions.

\subsubsection{pseudo-Anosov map}

A \textit{pseudo-Anosov} homeomorphism $\phi : F_0 \to F_0$ 
is defined as a homeomorphism on $F_0$ 
with $\phi|_{\partial}=\mathit{id}$ such that 
there exist a constant $k_{\phi} > 1$ and 
a pair of transverse measured foliations $\lambda^\pm$ 
satisfying that $\phi ( \lambda^+ ) = \frac{1}{k_{\phi}} \lambda^+$, 
$\phi ( \lambda^- ) = k_{\phi} \lambda^-$, 
and that each boundary component of $F_0$ is a cycle of leaves 
of $\lambda^+$ and of $\lambda^-$ and contains singularities of these two foliations. 
See \cite{Thurston} for original definition, 
and see also \cite{FLP} for a survey. 

Now let us take 
a pseudo-Anosov homeomorphism $\phi : F_0 \to F_0$ 
such that the capped off homeomorphism $\widehat{\phi} : F \to F$ 
naturally induced from $\phi$ is isotopic to the identity map. 
Moreover $\widehat{\phi} : F \to F$ can be taken 
to satisfy that 
the point $x_{j+1}$ is mapped to $x_j$ for each $j$ with $1 \le j \le 2b-1$ 
and $x_1$ is mapped to $x_{2b}$. 
The existence of such a homeomorphism is 
essentially guaranteed by the result given in \cite{IchiharaMotegi2005}. 
See also related works \cite{IchiharaMotegi2002,IchiharaMotegi2007,IchiharaMotegi2009}. 

Precisely, for the case of $g \ge 2$, 
we have such a homeomorphism by extending 
\cite[Example 2, p531]{IchiharaMotegi2005}, 
which gives an example for the genus $2$ with $3$ marked points case. 

When $g = 0$ or $1$, we make 
a pseudo-Anosov homeomorphism $\phi : F_0 \to F_0$ as follows. 
Let $F^1_0=F-\mathrm{int} N(x_3 \cup \cdots \cup x_{2b})$, 
which contains $F_0$ as a subsurface. 
Since we are assuming $b \ge 3$ if $g=0$ and $b \ge 2$ if $g=1$, 
the Euler characteristic of $F^1_0$ is negative. 
Then, as in the above, 
by generalizing \cite[Example 1, 2, p531]{IchiharaMotegi2005}, 
we can find a pseudo-Anosov homeomorphism $\phi_1 : F_0 \to F_0$ 
such that $\phi_1$ naturally induces 
a homeomorphism $\widehat{\phi_1} : F^1_0 \to F^1_0$ 
which is isotopic to the identity map and 
interchanges the two points $x_1$ and $x_2$. 
Note that the capped off homeomorphism of $\phi_1$ on $F$ is 
also isotopic to the identity map on $F$. 
We prepare $\phi_2, \cdots, \phi_{b-1}$ in the same way, and 
compose them to obtain $\phi = \phi_{b-1} \circ \cdots \circ \phi_1$. 
Then the pseudo-Anosov homeomorphism $\phi : F_0 \to F_0$ 
satisfies all the conditions we desired above.

\begin{Remark}
If we want to produce just a $b$-component link, not a knot, 
then it suffices to use the result obtained by Kra \cite{Kra}. 
The result given in \cite{IchiharaMotegi2005} is an extension of that in \cite{Kra}. 
Also if we want to make a link of components less than $b$ and at least 2, 
then it suffices to use a natural extension of \cite[Example 1]{IchiharaMotegi2005}. 
\end{Remark}

\subsubsection{Dehn twist}

Next let us take a Dehn twist along a curve $c$ on $F_0$ 
such that the curve $c$ is constructed as follows. 

Prepare diskal neighborhoods $D_j$ 
of the previously chosen arcs $\alpha_j$ ($j=1,\cdots , b$) on $F = F \times \{ 0 \}$, 
and pushing the interior of them into $V_i$ to obtain properly embedded disks 
$D_{i,1} , \cdots , D_{i,b} $ in $V_i$ for $i=1,2$. 
We assume that each $D_{i,j}$ cuts a ball including $\alpha_{i,j}$ from $V_i$. 
Let $P_j$ be the pair of pants which is bounded by 
$\partial D_j$ together with the two components of $\partial F_0$ 
corresponding to $x_{2j-1}$ and $x_{2j}$. 

We here use the result given as \cite[Lemma 5.2]{CampisiRathbun}: 
Given any genus $g$ Heegaard splitting $(V_1,V_2;F)$ of a closed 3-manifold, 
there exist essential disks 
$D'_{i,1} , \cdots , D'_{i,3g-3}$ for $V_i$ for $i=1,2$ 
such that their boundary curves give a pair of pants decompositions on $F$ 
(i.e., their boundary curves decompose $F$ into pairs of pants maximally) 
and that there exists a curve $c_0$ in $F$ 
which \textit{traverses all the seams} of 
all pairs of pants obtained from the both pants decompositions. 
We further assume that 
$D_{i,1} , \cdots , D_{i,b} $ is contained in the same region 
obtained by cutting $V_i$ along $D'_{i,1} , \cdots , D'_{i,3g-3}$ for $i=1,2$. 

Here let us recall the definitions used in the above. 
For a pair of pants $P$, 
a \textit{seam} of $P$ is an essential arc in $P$ 
connecting two distinct boundary components of $P$. 
Let $\mathcal{P}$ be 
a set of pairs of pants embedded into a surface with 
mutually disjoint interiors 
and $\gamma$ a simple closed curve on the surface. 
We say that $\gamma$ \textit{traverses a seam} in $P \in \mathcal{P}$ 
if $\gamma$ intersects $P$ minimally, up to isotopy, and 
a component of $\gamma \cap P$ is a seam. 
We say that $\gamma$ \textit{traverses all the seams} of $\mathcal{P}$ 
if $\gamma$ traverses every seam of every pair of pants in $\mathcal{P}$. 
Please remark that these definitions are slightly different with 
those in \cite{MinskyMoriahSchleimer} and \cite{CampisiRathbun}. 

We further take disks $D''_{i,1}$, $\cdots$, $D''_{i,b-1}$, $D''_{i,b}$ 
in $V_i$ for $i=1,2$ such that, 
together with $D_{i,1} , \cdots , D_{i,b} $ and 
$D'_{i,1} , \cdots , D'_{i,3g-3}$, 
the collection of simple closed curves 
$\{ \partial D_{i,j} \}_{1 \le j \le b} \cup \{ \partial D'_{i,j'} \}_{1 \le j' \le 3g-3} \cup \{ \partial D''_{i,j''} \}_{1 \le j'' \le b}$ divides $F_0$ into 
a set of pairs of pants, say $\mathcal{P}_i$. 
Note that $P_1,\cdots, P_b$ are included in both $\mathcal{P}_i$'s. 
Then we have a simple closed curve $\gamma$ on $F_0$ 
traverses all the seams of pairs of pants in 
$\mathcal{P}'_i  = \mathcal{P}_i - \{ P_1,\cdots, P_b \}$. 
See Figure \ref{FigureDisks} for closed genus two case for example. 

\begin{figure}\begin{center}
 {\unitlength=1cm
 \begin{picture}(11.5,6.2)
\put(0,0){\includegraphics[keepaspectratio]
{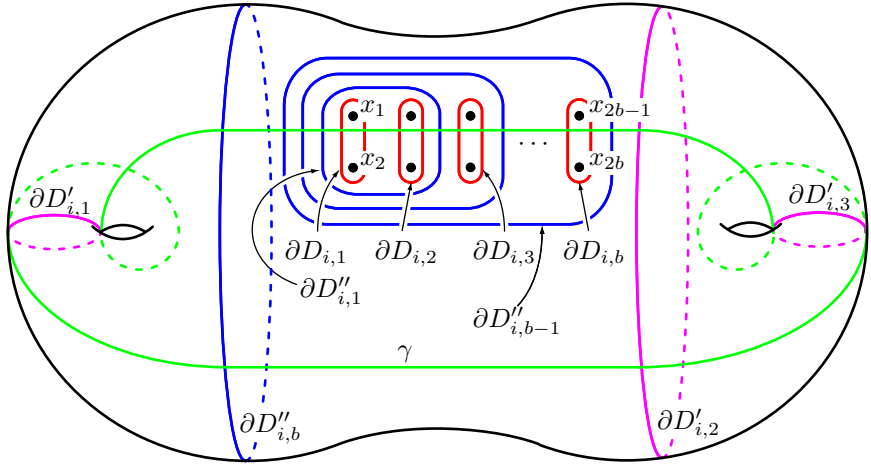}}
 \put(4.7,4.66){$x_1$}
 \put(4.7,3.96){$x_2$}
 \put(7.75,4.66){$x_{2b-1}$}
 \put(7.75,3.96){$x_{2b}$}
 \put(3.7,2.7){$\partial D_{i,1}$}
 \put(4.9,2.7){$\partial D_{i,2}$}
 \put(6.2,2.7){$\partial D_{i,3}$}
 \put(7.4,2.7){$\partial D_{i,b}$}
 \put(3.9,2.2){$\partial D''_{i,1}$}
 \put(6.8,4.15){$\cdots$}
 \put(6.2,1.8){$\partial D''_{i,b-1}$}
 \put(3.1,.4){$\partial D''_{i,b}$}
 \put(5.2,1.4){$\gamma$}
 \put(.3,3.4){$\partial D'_{i,1}$}
 \put(8.65,.45){$\partial D'_{i,2}$}
 \put(10.4,3.45){$\partial D'_{i,3}$}
 \end{picture}}
 \caption{$\partial D''_{i,1} , \cdots , \partial D''_{i,b-1}, \partial
D''_{i,b}$ and $\gamma$ for closed genus two case.}
 \label{FigureDisks}
\end{center}\end{figure}

From the simple closed curve $\gamma$ regarded as on $F$, 
by smoothing $\gamma$ and the arc $\alpha_1$ at the intersection point, 
we have a simple arc on $F$ with endpoints $x_1$ and $x_2$ 
which traverses all the seams of pairs of pants in $\mathcal{P}'_i$ for $i=1,2$. 
We take a diskal neighborhood of the arc on $F$, 
and let $c$ be its boundary curve. 
Then $c$ is also a simple closed curve on $F_0$ 
traverses all the seams of pairs of pants in $\mathcal{P}'_i $ for $i=1,2$. 

Now recall that $\tau$ is a Dehn twist along a curve $c$ on $F_0$. 
This gives a non-trivial map on $F_0$, 
but its natural extension $\widehat{\tau}$ on $F$ 
is isotopic to the identity map on $F$, 
for it bounds a disk on $F$, 
which contains exactly two marked points $x_1$ and $x_2$.

\subsection{Claims}

Here we prove two key claims.

The first claim is a modification 
of \cite[Lemma 2.3]{MinskyMoriahSchleimer}. 
To state the claim, we need to prepare some notation. 
Let $\overline{\mathcal{D} (W_1)}$ be the closure of 
the disk set $\mathcal{D} (W_1) $ of $W_1$ 
in the space of projective measured laminations $\mathcal{PML} (F_0)$ on $F_0$. 
Let $\overline{\mathcal{D} (W_2)}$ be the closure of 
the disk set $\mathcal{D} (W_2) $ of $W_2$, 
which is also regarded as in $\mathcal{PML} ( F_0 )$. 
We here omit the definitions of the terms used here. 
Please refer \cite{FLP} for example. 

\begin{Claim}
With sufficiently large $N>0$, 
the map $\phi_N  = \tau^N \circ \phi \circ \tau^{-N}$ is 
a pseudo-Anosov homeomorphism of $F_0$ 
with stable and unstable laminations $\lambda_N^\pm \in \mathcal{PML} ( F_0 )$ 
such that 
$\lambda_N^+ \not\in \overline{\mathcal{D} (W_1) }$ and 
$\lambda_N^- \not\in \overline{\mathcal{D} (W_2) }$. 
\end{Claim}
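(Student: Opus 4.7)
The plan is to prove Claim 1 by a limit argument in $\mathcal{PML}(F_0)$: conjugating $\phi$ by $\tau^N$ applies $\tau^N$ to its invariant laminations, and these iterates converge to a single point $[c]$. Once $[c]$ is shown to avoid both closed disk sets, the conclusion is immediate from closedness. The first step is routine: $\phi_N = \tau^N \circ \phi \circ \tau^{-N}$ is topologically conjugate to $\phi$ and fixes $\partial F_0$ pointwise, so it is itself pseudo-Anosov with expansion constant $k_\phi$ and with stable and unstable measured laminations $\lambda_N^\pm = \tau^N(\lambda^\pm) \in \mathcal{PML}(F_0)$.

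Next I would invoke the classical dynamical fact about Dehn twists: for any $\mu \in \mathcal{PML}(F_0)$ with $i(c,\mu) > 0$ one has $\tau^N(\mu) \to [c]$ in $\mathcal{PML}(F_0)$ as $N \to \infty$, where $[c]$ denotes the projective class of the measured lamination supported on $c$. The invariant laminations of a pseudo-Anosov map fill $F_0$, so $i(c,\lambda^\pm) > 0$, and therefore $\lambda_N^\pm \to [c]$ in $\mathcal{PML}(F_0)$.

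The main step, and the main obstacle, is to verify that $[c] \notin \overline{\mathcal{D}(W_i)}$ for $i=1,2$; this is where the elaborate construction of $c$ is used. Recall that $c$ traverses every seam of every pair of pants in $\mathcal{P}'_i = \mathcal{P}_i - \{P_1,\dots,P_b\}$, and that every pants curve of $\mathcal{P}_i$ bounds an essential disk in $W_i$ (the curves $\partial D_{i,j}$ being accounted for by the punctured pants $P_j$). Following the template of \cite[Lemma 2.3]{MinskyMoriahSchleimer}, I would argue that any essential disk in $W_i$ can, after Haken-style disk surgeries against the fixed disk system $\{D_{i,j}\} \cup \{D'_{i,j'}\} \cup \{D''_{i,j''}\}$, be replaced by one whose boundary has bounded intersection with each pair of pants of $\mathcal{P}'_i$ and avoids at least one seam in some such pants. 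A $\mathcal{PML}$-limit of a sequence of such boundaries is then carried by a sublamination missing a seam of some pants in $\mathcal{P}'_i$, which rules out $[c]$. Once $[c] \notin \overline{\mathcal{D}(W_i)}$ is established, closedness of $\overline{\mathcal{D}(W_i)}$ in $\mathcal{PML}(F_0)$ provides an open neighborhood $U$ of $[c]$ disjoint from $\overline{\mathcal{D}(W_1)} \cup \overline{\mathcal{D}(W_2)}$, and $\lambda_N^\pm \in U$ for all sufficiently large $N$, finishing the argument.
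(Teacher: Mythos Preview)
Your overall strategy matches the paper's: $\phi_N$ is pseudo-Anosov by conjugation, $\lambda_N^\pm = \tau^N(\lambda^\pm) \to [c]$ in $\mathcal{PML}(F_0)$ since $\lambda^\pm$ fill $F_0$ and hence meet $c$, and the conclusion follows once $[c] \notin \overline{\mathcal{D}(W_i)}$ for $i=1,2$. The paper proceeds exactly along these lines.

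The gap is in your argument for $[c] \notin \overline{\mathcal{D}(W_i)}$. You propose to \emph{replace} each meridian $\mu$ by a surgered meridian $\mu'$ having bounded pants-intersection and missing some seam, and then argue that $\mathcal{PML}$-limits of such $\mu'$ cannot equal $[c]$. But this does not address the actual question: $\overline{\mathcal{D}(W_i)}$ is the closure of the set of \emph{all} meridians, and the surgery replacement $\mu \mapsto \mu'$ is neither continuous nor single-valued in $\mathcal{PML}(F_0)$. If a sequence $\mu_n \to [c]$, there is no reason the surgered curves $\mu_n'$ still converge to $[c]$, so showing that the $[\mu_n']$ avoid a neighborhood of $[c]$ tells you nothing about the $[\mu_n]$.

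The paper (following \cite[Lemmas 2.2--2.3]{MinskyMoriahSchleimer}) works directly with the original sequence. The outermost-disk argument is not used to replace $\mu_n$; it shows that each $\mu_n$ (if not parallel to a pants curve of $\mathcal{P}'_i$) itself contains a \emph{wave} $w$ in some $Y \in \mathcal{P}'_i$, i.e.\ an essential arc with both endpoints on a single cuff. Passing to a subsequence one fixes $Y$ and $w$. Now the convergence $[\mu_n] \to [c]$ is invoked: since $c$ traverses every seam of $Y$, for large $n$ the curve $\mu_n$ must also traverse every seam of $Y$; but one of those seams crosses $w$ essentially, and both live inside the embedded curve $\mu_n$, forcing a self-intersection. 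The key point is that the wave is a feature of $\mu_n$ itself, so it can be played off against the seam-traversing property forced on $\mu_n$ by $\mathcal{PML}$-convergence to $[c]$. Your surgery step discards exactly this link.
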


\begin{proof}
Since $\phi$ is pseudo-Anosov, 
its conjugate $\phi_N$ is also a pseudo-Anosov map of $F_0$. 
Let $\lambda^\pm$ be the stable/unstable laminations of $\phi$. 
Then the stable/unstable laminations $\lambda_N^\pm$ of $\phi_N$ 
are just $\tau^N ( \lambda^\pm )$ as claimed in \cite{MinskyMoriahSchleimer}. 
Both laminations intersect the curve $c$, since $\lambda^\pm$ is filling. 
Thus, as $N \to \infty$, 
the both laminations $\lambda^\pm_N$ converge to $[c]$ in $\mathcal{PML}(F_0)$, 
which is guaranteed by calculations given in \cite[Appendix C]{FLP}. 

On the other hand, we see $[c] \not\in \overline{\mathcal{D} (W_1)}$ as follows. 
Set $\mathcal{C}_1= \{ \partial D'_{1,j'} \}_{1 \le j' \le 3g-3} \cup \{ \partial D''_{1,j''} \}_{1 \le j'' \le b}$. 
Then $c$ is not isotopic to any element of $\mathcal{C}_1$, 
otherwise $c$ does not traverse 
all the seams of pairs of pants in $\mathcal{P}'_1$, 
contradicting the assumption. 
Suppose that there exists 
a sequence of meridians $\boldsymbol{\mu}=\{ \mu_1 , \mu_2 , \cdots \}$ on $F_0$ 
which gives a sequence $\{ [ \mu_1] , [\mu_2] , \cdots \} \subset \mathcal{D} (W_1)$ 
converging to $[c] \in \mathcal{PML}(F_0)$. 
By passing to a subsequence, 
we may assume that none of the curves in $\boldsymbol{\mu}$ 
is isotopic to a curve in $\mathcal{C}_1$. 
Then we see that every curve in $\boldsymbol{\mu}$ has a \textit{wave} 
(i.e., an essential arc in a pair of pants $P$ 
with endpoints on the same boundary component of $P$) 
in some pair of pants in $\mathcal{P}'_1$, 
since any meridian is either parallel to an element of $\mathcal{C}_1$ 
or has a wave in some pair of pants in $\mathcal{P}'_1$ 
by outermost disk arguments as shown in \cite[Lemma 2.2]{MinskyMoriahSchleimer}. 
Passing to subsequences again, 
we may assume that all the curves in $\boldsymbol{\mu}$ 
have the same wave $w$ in the same pair of pants $Y \in \mathcal{P}'_1$. 
Since $c$ traversed all the seams of pairs of pants in $\mathcal{P}'_1$, 
the curve $\mu$ in $\boldsymbol{\mu}$ with a large enough index 
would also traverse all the seams of pairs of pants in $\mathcal{P}'_1$. 
Since one of the seams of $Y$ intersects $w$ essentially, 
it would follow that the curve $\mu$ 
also has a self-intersection, a contradiction. 
Consequently, for a sufficiently large $N$, 
we see $\lambda^+_N \not\in \overline{\mathcal{D} (W_1)}$. 

By considering $W_2$ and 
by regarding $\mathcal{P}'_2 \subset F\times \{1\} \subset \partial W_2$, 
we can prove $\lambda^-_N \not\in \overline{\mathcal{D} (W_2)}$ in the same way. 
\end{proof}

The next claim is 
a bridge decomposition version of 
\cite[Lemma 2.1]{MinskyMoriahSchleimer}, 
which is essentially proved as \cite[Theorem 2.3]{AbramsSchleimer}, 
and is originally due to \cite{Hempel}. 
We here follow 
the outline of their proof of \cite[Theorem 2.3]{AbramsSchleimer}.

\begin{Claim}
Let $(\phi_N)^m ( \mathcal{D} (W_2) ) $ be 
the set of vertices in $\mathcal{C}(F_0)$ 
each of which corresponds to 
the image of a curve bounding a disk 
in $W_2$ by $(\phi_N)^m$ for $m \in \mathbb{N}$. 
Then 
$d ( \mathcal{D} (W_1) , (\phi_N)^m ( \mathcal{D} (W_2) ) ) \to \infty$ 
as $m \to \infty$.
\end{Claim}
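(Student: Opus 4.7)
The plan is to argue by contradiction using the North--South dynamics of the pseudo-Anosov $\phi_N$ on $\mathcal{PML}(F_0)$ together with Claim 1.

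First, I would establish the following convergence statement: as $m \to \infty$, the image $(\phi_N)^m\bigl(\overline{\mathcal{D}(W_2)}\bigr)$ converges to $\{\lambda_N^+\}$ in the Hausdorff topology on closed subsets of $\mathcal{PML}(F_0)$. Any pseudo-Anosov acts on $\mathcal{PML}$ with source--sink dynamics: every point other than $\lambda_N^-$ has forward orbit converging to $\lambda_N^+$, and the convergence is uniform on compact sets disjoint from a neighborhood of $\lambda_N^-$. Since Claim 1 gives $\lambda_N^- \notin \overline{\mathcal{D}(W_2)}$ and $\overline{\mathcal{D}(W_2)}$ is closed in the compact space $\mathcal{PML}(F_0)$, this uniform convergence applies.

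Next, suppose for contradiction that there exist an integer $K$ and a subsequence $m_k \to \infty$ with $d\bigl(\mathcal{D}(W_1), (\phi_N)^{m_k}(\mathcal{D}(W_2))\bigr) \le K$. For each $k$, choose a geodesic $\mu_0^{(k)}, \mu_1^{(k)}, \ldots, \mu_{L}^{(k)}$ in $\mathcal{C}(F_0)$ with $L \le K$ realizing this bound, with $\mu_0^{(k)} \in \mathcal{D}(W_1)$ and $\mu_L^{(k)} \in (\phi_N)^{m_k}(\mathcal{D}(W_2))$. Passing to a subsequence, I may assume $L$ is constant, and by compactness of $\mathcal{PML}(F_0)$, I may further assume $\mu_j^{(k)}$ converges to some $\lambda_j \in \mathcal{PML}(F_0)$ for each $j$. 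Consecutive vertices of a path in $\mathcal{C}(F_0)$ are disjoint, so $i(\mu_j^{(k)}, \mu_{j+1}^{(k)}) = 0$, and continuity of the geometric intersection pairing on $\mathcal{PML}(F_0)$ gives $i(\lambda_j, \lambda_{j+1}) = 0$ for all $j$. Moreover, $\lambda_0 \in \overline{\mathcal{D}(W_1)}$ by construction, and the first step forces $\lambda_L = \lambda_N^+$.

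Finally, I would exploit that $\lambda_N^+$, being the unstable lamination of the pseudo-Anosov $\phi_N$, is minimal, filling, and uniquely ergodic on $F_0$. Consequently, any $\lambda \in \mathcal{PML}(F_0)$ with $i(\lambda_N^+, \lambda) = 0$ must coincide with $\lambda_N^+$. Applying this to $\lambda_{L-1}$ yields $\lambda_{L-1} = \lambda_N^+$, and iterating backwards gives $\lambda_0 = \lambda_N^+$, which contradicts Claim 1 since $\lambda_N^+ \notin \overline{\mathcal{D}(W_1)}$.

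The main obstacle is the first step: one must know that $\overline{\mathcal{D}(W_2)}$ stays a uniform distance away from $\lambda_N^-$ in $\mathcal{PML}(F_0)$ so that source--sink dynamics may be applied to the whole closure, not merely pointwise. This is precisely what Claim 1 provides, so once that input is invoked the argument runs along the standard Hempel/Abrams--Schleimer lines.
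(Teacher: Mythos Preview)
Your proposal is correct and follows essentially the same route as the paper's proof: both argue by contradiction, use Claim~1 together with the North--South dynamics of $\phi_N$ on $\mathcal{PML}(F_0)$ to force the endpoint sequence to converge to $\lambda_N^+$, pass to subsequences so every vertex of the bounded-length paths converges, and then propagate $\lambda_N^+$ backward along the chain using minimality and unique ergodicity to reach a contradiction with $\lambda_N^+\notin\overline{\mathcal{D}(W_1)}$. The only cosmetic difference is that you phrase the first step as Hausdorff convergence of $(\phi_N)^m\bigl(\overline{\mathcal{D}(W_2)}\bigr)$ to $\{\lambda_N^+\}$, whereas the paper argues directly that the individual curves $\alpha_Z^m$ converge; both amount to the same use of source--sink dynamics on the compact set $\overline{\mathcal{D}(W_2)}$ away from $\lambda_N^-$.
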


\begin{proof}
Suppose for a contradiction that 
$d( \mathcal{D} (W_1) , (\phi_N)^m ( \mathcal{D} (W_2 ) ) $ 
is bounded by some integer $Z$ for all $m \in \mathbb{N}$. 
Then, for every $m$, 
there is a set of essential curves 
$\{ \alpha_k^m \}_{k=0}^Z$ such that 
$[\alpha_0^m] \in \mathcal{D} (W_1)$, 
$[\alpha_Z^m] \in (\phi_N)^m ( \mathcal{D} (W_2 ) )$, 
and $\alpha_k^m \cap \alpha_{k+1}^m = \emptyset$. 
It follows that there is a curve $\beta_m \in \mathcal{D} (W_2)$ 
such that $(\phi_N)^m ( \beta_m ) = \alpha_Z^m$. 
For these $\beta_m$'s, 
the corresponding points in $\mathcal{PML}(F_0)$ 
avoid an open neighborhood of 
the unstable lamination $\lambda_N^-$, 
since $\lambda_N^-$ is not contained 
in $\overline{\mathcal{D} (W_2)}$ by the above claim. 
Thus, the points $[ (\phi_N)^m ( \beta_m ) ] = [ \alpha_Z^m ]$ 
converge to $\lambda_N^+$ as a sequence in $\mathcal{PML}(F_0)$.
Inductively pass to subsequences exactly $Z$ times 
to ensure that the $k$-th sequence $\{ \alpha^m_k \}$ also gives 
a convergent sequence in $\mathcal{PML}(F_0)$, for $k= Z-1, Z-2,...,0$. 
Denote the limit of the sequence $\{ [ \alpha_k^m ] \}$ by $\lambda_k$. 
Note that $\lambda_Z = \lambda_N^+$, 
in particular, $\lambda_Z \ne \lambda_0$ also by the above claim 
since $\lambda_0 \in \overline{\mathcal{D} (W_1)}$ but 
$\lambda_N^+ \not\in \overline{\mathcal{D} (W_1) }$. 
Then, as $\alpha_k^m \cap \alpha_{k+1}^m = \emptyset$, 
we have $\iota (\lambda_k , \lambda_{k+1}) = 0$, abusing notation slightly, 
where $\iota ( \cdot , \cdot )$ denotes the geometric intersection number. 
Since $\iota$ is a continuous function 
$\mathcal{ML}(F_0) \times \mathcal{ML}(F_0) \to \mathbb{R}$, 
it follows that 
$\lambda_Z$ and $\lambda_{Z-1}$ 
must be the same point of $\mathcal{PML}(F_0)$, 
for $\lambda_N^+$ is minimal and uniquely ergodic. 
Inductively, $\lambda_k = \lambda_{k-1}$ in $\mathcal{PML}(F_0)$ 
which implies that $\lambda_Z = \lambda_0$. This is a contradiction. 
\end{proof}

\subsection{Replacement}

To complete the proof of the Theorem 1, 
we replace $V_2$ in $M$ via 
an extension of the map $\widehat{(\phi_N)^m}$ on $\partial V_2$ as follows. 

Recall that, by constructions, 
the extensions $\widehat{\phi}$ and $\widehat{\tau}$ on $F$ 
are isotopic to the identity on $F$. 
Thus $\phi_N= \tau^N \circ \phi \circ \tau^{-N}$ has 
a natural extension $\widehat{\phi_N}$ on $F$ 
which is isotopic to the identity on $F$. 
Set $m = 2 b m_0 + 1$ for some integer $m_0$, 
and consider the map $(\phi_N)^m$, 
whose natural extension $\widehat{(\phi_N)^m}$ 
is isotopic to the identity on $F$ also. 

Let $\Phi_m : F \times [0,1] \to F \times [0,1]$ be an isotopy 
from $\widehat{(\phi_N)^m}$ to the identity 
with $\Phi_m (x, 0) = ( \widehat{(\phi_N)^m} (x), 0)$ and $\Phi_m (x, 1) = (x, 1)$.
Note that 
$\Phi_m (x_j, 0) = ( \widehat{(\phi_N)^m} ( x_j ), 0) = ( x_{j-1} , 0 )$ 
for $1 \le j \le 2b-1$ and 
$\Phi_m (x_1, 0) = ( \widehat{(\phi_N)^m} ( x_1 ), 0) = ( x_{2b} , 0 )$, 
since $\widehat{\phi}$ maps $x_{j+1}$ to $x_j$ and $x_1$ to $x_{2b}$, 
$\widehat{\tau}$ keeps $x_j$'s invariant, 
and $m = 2 b m_0 + 1$ with $m_0 \in \mathbb{Z}$. 
Regarding $F \times [0,1] \subset V_2$, 
this map $\Phi_m : F \times [0,1] \to F \times [0,1]$ 
can be naturally extended 
to the map $\widehat{\Phi_m} : V_2 \to V_2$, 
since $\Phi_m |_{F \times \{ 1 \}}$ is the identity map. 

Now we replace $V_2$ by $\widehat{\Phi_m} (V_2)$. 
Precisely, when we regard $M$ as $V_1 \cup_h V_2$ 
with a gluing map $h : \partial V_1 \to \partial V_2$, 
we set $M_m$ as $V_1 \cup_h \widehat{\Phi_m} (V_2)$. 

Note that this manifold $M_m$ can be regarded as 
$V_1 \cup_{(\widehat{\Phi_m})^{-1} \circ h}  V_2$ 
with the gluing map $(\widehat{\Phi_m})^{-1} \circ h : \partial V_1 \to \partial V_2$. 
Since $(\widehat{\Phi_m})^{-1}$ is isotopic to the identity map on $\partial V_2$, 
the gluing map $(\widehat{\Phi_m})^{-1} \circ h$ is isotopic to $h$. 
This impiles that $M_m$ is homeomorphic to $M$. 
In particular, $(\widehat{\Phi_m})^{-1} \circ h (\partial V_1) = \partial V_2$ 
in $M_m$ is identified with $F$ in $M$. 

In $M_m$, we have a knot $K_m$, 
as a union of 
$\alpha_{1,1} \cup  \cdots \cup \alpha_{1,b}$ in $V_1$ and 
$\widehat{\Phi_m} (\alpha_{2,1} \cup  \cdots \cup \alpha_{2,b})$ in $\widehat{\Phi_m} (V_2)$. 
This $K_m$ has a $b$-bridge presentation with respect to 
the Heegaard surface obtained from $\partial V_i$'s. 
Under the identification observed as above, 
the knot $K_m$ can be regarded as in $M$, and 
it has a $b$-bridge presentation with respect to $F$. 

Note that the distance of the bridge splitting of $K_m$ with respect to $F$ 
is equal to 
$d ( \mathcal{D} (W_1) , \mathcal{D} ( \widehat{\Phi_m} (W_2) ) $ 
in $\mathcal{C}(F_0)$. 
On the other hand, by construction, 
we see that $\widehat{\Phi_m} |_{F_0} = (\phi_N)^m$. 
It follows that 
$\mathcal{D} ( \widehat{\Phi_m} (W_2) ) = 
(\phi_N)^m ( \mathcal{D} (W_2) ) )$ in $\mathcal{C}(F_0)$. 
Thus the distance of the bridge splitting of $K_m$ 
is equal to 
$d ( \mathcal{D} (W_1) , (\phi_N)^m ( \mathcal{D} (W_2) ) )$ 
in $\mathcal{C}(F_0)$. 

Consequently, together with Claim 2, we see that 
the distance of the bridge splitting of $K_m$ goes to infinity as $m \to \infty$. 
This completes the proof of Theorem \ref{Thm}.

\section{Application}

Let $K$ be a knot in a closed $3$-manifold $M$. 
The \textit{meridional destabilizing number} of $K$, 
denoted by $\mathit{md}(K)$, is defined by the maximal number of $m$ 
such that $(M,K)$ admits a $(t(K)-m+1,m)$-bridge splitting. 
Here $t(K)$ is the tunnel number of $K$, i.e., 
the minimal value $t$ such that $(M,K)$ admits a Heegaard surface of genus $t+1$. 
See \cite{Saito2} for the original definition. 
Note that $\mathit{md}(K)\le t(K)+1$. As an application of Theorem \ref{Thm}, 
we prove Corollary \ref{cor} described in Section 1 
by using the following Tomova's theorem. 

\begin{Lemma}[c.f. {\cite[Theorem 10.3]{Tomova}}]\label{Lem}
Let $K$ be a non-trivial knot in a closed orientable irreducible 3-manifold $M$ 
and $\mathcal{S}=(S,S\cap K)$ a bridge surface of $(M,K)$ 
such that $|S\cap K|\le 4$ if $S$ is a $2$-sphere. 
If $\mathcal{S}'=(S',S'\cap K)$ is also a bridge surface 
or a Heegaard surface of $(M,K)$, 
then 
\begin{enumerate}
\item $d(\mathcal{S})\le 2-\chi(S'-K)$ or 
\item $\mathcal{S}'$ is obtained from $\mathcal{S}$ by stabilization, 
meridional stabilization and perturbation. 
\end{enumerate}
\end{Lemma}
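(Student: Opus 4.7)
The plan is to adapt the Rubinstein--Scharlemann sweepout/graphic machinery, as extended by Scharlemann--Tomova to bridge surfaces, to this setting. The rough strategy is: put $\mathcal{S}$ and $\mathcal{S}'$ in generic position with respect to a one-parameter sweepout, and then show that either the intersection pattern produces a short path in the curve complex $\mathcal{C}(S_0)$ between the disk sets $\mathcal{D}(W_1)$ and $\mathcal{D}(W_2)$, giving conclusion (1), or the critical events in the sweepout can be traded for a finite sequence of stabilizations, meridional stabilizations, and perturbations carrying $\mathcal{S}$ to $\mathcal{S}'$, giving conclusion (2).

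First I would fix a sweepout $\{S_t\}_{t \in [-1,1]}$ of $M$ by surfaces isotopic to $S$, with $K$ placed in bridge position at every regular level, and isotope $\mathcal{S}'$ so that the family $\{S_t \cap S'\}$ has only finitely many critical parameters. At each regular $t$, the essential components of $S_t \cap S'_0$ can be labeled by whether they bound compressing or cut disks of $S'_0$ (possibly meeting $K$ in a single point) on one of the two sides of $S_t$, yielding a Rubinstein--Scharlemann graphic on the parameter interval. If at some regular $t_0$ the intersection curves exhibit simultaneous compressions on both sides of $\mathcal{S}$, a standard chain-of-disks argument produces a sequence of vertices in $\mathcal{C}(S_0)$ from $\mathcal{D}(W_1)$ to $\mathcal{D}(W_2)$ whose length is bounded by the number of essential components of $S_t \cap S'_0$, and hence by $2-\chi(S'-K)$. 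That is exactly conclusion (1).

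If no such simultaneously bicompressible level exists, then the parameter interval decomposes into regions labeled by which side of $\mathcal{S}$ that $\mathcal{S}'$ compresses into, and standard untelescoping of the resulting bridge decomposition of $(M,K)$ shows that the transitions between adjacent regions realize exactly stabilizations, meridional stabilizations, and perturbations, giving conclusion (2). The main obstacle is the careful bookkeeping that separates these three move types: perturbations arise from canceling bridge pairs produced by tangencies of $K$ with the sweepout, meridional stabilizations come from compressions along disks meeting $K$ once, and only ordinary stabilizations correspond to canceling handle pairs disjoint from $K$. Distinguishing these in the graphic while keeping the Euler characteristic estimate sharp in case (1), and verifying that no thin component of the decomposition can be essential (which uses the irreducibility of $M$ and non-triviality of $K$), is the technically demanding piece of the argument.
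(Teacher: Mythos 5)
First, a point of comparison: the paper does not prove this statement at all. It is imported wholesale from Tomova's paper (Theorem~10.3 of \cite{Tomova}), stated here with ``c.f.''\ and used as a black box in the proof of Corollary~\ref{cor}. So there is no in-paper argument to measure you against; the relevant benchmark is Tomova's roughly fifty-page development of the $c$-disk/sweepout machinery. Against that benchmark, your proposal is a reasonable table of contents for the cited proof --- the dichotomy between a ``simultaneously bicompressible'' configuration yielding a distance bound and an ``untelescoping'' configuration yielding a sequence of (meridional) stabilizations and perturbations is indeed the shape of Tomova's argument --- but it is a roadmap rather than a proof: essentially every hard step is deferred to ``standard'' arguments or acknowledged as ``the technically demanding piece.''

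Two concrete gaps. First, your mechanism for conclusion~(1) is wrong as stated: you bound the length of the path in $\mathcal{C}(S_0)$ by ``the number of essential components of $S_t\cap S_0'$, and hence by $2-\chi(S'-K)$.'' The number of essential intersection curves of two surfaces in general position is not bounded by the Euler characteristic of either one; it can be made arbitrarily large by isotopy. In the actual argument the bound $2-\chi(S'-K)$ arises differently: one produces a chain of $c$-compressions of $S'-K$ (compressions along disks disjoint from $K$ or meeting it once), each of which strictly increases $\chi$ and moves the associated vertex of $\mathcal{C}(S_0)$ a uniformly bounded amount, terminating in meridian disks of $W_1$ and $W_2$ respectively; the count of such steps, not of intersection curves, is what $2-\chi(S'-K)$ controls. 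Second, the governing case division in Tomova's proof is whether $S'$ is $c$-weakly incompressible or $c$-strongly compressible (where cut disks --- disks meeting $K$ once --- must be treated on the same footing as honest compressing disks throughout, which is also the source of the exclusion of spheres with more than four punctures), together with a separate subcase in which $S'$ lies in a neighborhood of a spine of one side of $S$. Your sketch folds all of this into a single sweepout of $S$ against $S'$ and does not address the spine case at all, and in the $c$-strongly compressible case the essential thin levels produced by untelescoping are not merely ``ruled out'' by irreducibility --- when they are essential they feed back into alternative~(1) via a separate distance bound for essential surfaces. None of this is fatal to the strategy, but as written the proposal does not constitute a proof; it is an annotated citation of the theorem the paper itself simply quotes.
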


\begin{proof}[Proof of Corollary \ref{cor}]
Let $t, m$ be a pair of integers satisfying $t \ge 1$ and $m \ge 0$ with $m \le t+1$. 
We will show that there exists a knot $K\subset S^3$ 
of tunnel number $t$ and of meridional destabilizing number $m$.

First consider the case of $m=0$. 
It follows from \cite[Theorem 4.2]{MinskyMoriahSchleimer} that 
for any positive integer $t\ge 1$, there is a knot $K\subset S^3$ 
with $t(K)=t$ so that $K$ admits no $(t,1)$-bridge surface. 
This implies that there exists a knot $K\subset S^3$ 
with $t(K)=t$ and $\mathit{md}(K)=0$. 

Next we consider $m>0$. 
It follows from Theorem \ref{Thm} that 
there exists a knot $K\subset S^3$ so that $K$ admits 
a $(t-m+1,m)$-bridge surface $\mathcal{S}$ of distance greater than $2t+2$. 
Then the knot $K$ is shown to satisfy $t(K)=t$ and $\mathit{md}(K)=m$ as follows. 

We have a $(t+1,0)$-bridge surface from $\mathcal{S}$ by $m$-times 
meridional stabilizations, and hence we see $t(K)\le t$. 
Suppose that $t(K)<t$, i.e., $K$ admits a $(t,0)$-bridge surface $\mathcal{S}'$. 
Then, since $d(\mathcal{S}) > 2t = 2-\chi(S'-K)$, 
it follows from Lemma \ref{Lem} that $\mathcal{S}'$ is obtained from $\mathcal{S}$ 
by stabilization, meridional stabilization and perturbation. 
We can observe that 
a $(t-m+2,m)$-bridge surface is obtained 
from $\mathcal{S}$ by one-time stabilization, 
a $(t-m+2,m-1)$-bridge surface by one-time meridional stabilization, and 
a $(t-m+1,m+1)$-bridge surface by one-time perturbation. 
These imply that we cannot have a $(t,0)$-bridge surface, which contradicts 
that $\mathcal{S}'$ is obtained from $\mathcal{S}$ by such operations. 
Hence we see $t(K)=t$. 
Similarly, we see that $K$ admits no $(t-m,m+1)$-bridge surface 
and hence $md(K)\le m$. 
This implies $\mathit{md}(K)=m$. 
\end{proof}

\begin{Remark}
We can improve Corollary \ref{cor} as follows: 
For any integers $t \ge 1$, $m \ge 0$ with $m \le t+1$ and $b\ge 0$, 
there exists a knot $K\subset S^3$ with $t(K)=t$ and $\mathit{md}(K)=m$ 
which admits no $(t-m,b)$-bridge splitting. This is pointed out by Yo'av Rieck. 
\end{Remark}

\bibliographystyle{amsplain}

\end{document}